\documentclass[12pt]{amsart}
\usepackage{amsmath, amsfonts, amssymb, amsthm}
\usepackage{mathtools}
\usepackage{graphicx}
\usepackage{caption}
\usepackage{subcaption}

\title{Consonance in music - the Pythagorean approach revisited}

\author{Jan Cichowlas}
\address{Faculty of Mathematics and Information Science, Warsaw University of Technology, 00-661 Warsaw, Poland}
\author{Paweł Dłotko}
\address{Dioscuri Centre in Topological Data Analysis, Mathematical Institute, Polish Academy of Sciences, 00-656 Warsaw, Poland }
\author{Marek Kuś}
\address{Center for Theoretical Physics, Polish Academy of Sciences, 02-668 Warsaw, Poland}
\author{Jan Spaliński}
\address{Faculty of Mathematics and Information Science, Warsaw University of Technology, 00-661 Warsaw, Poland}

\newtheorem{lemma}{Lemma}

\newtheorem{thm}{Theorem}
\newtheorem{corollary}{Corollary}

\newtheorem{example}{Example}

\def\cosim{\textrm{Cosim\,}}

\def\cons{\textrm{Cons}\,}
\def\O{\textbf{O}\,}


\begin{document}

\begin{abstract} The Pythagorean school attributed consonance in music
to simplicity of frequency ratios between musical tones. In the last two centuries,
the consonance curves developed by Helmholtz, Plompt and Levelt shifted focus to psycho-acoustic considerations in perceiving consonances. The appearance of peaks of these curves at the ratios considered by the Pythagorean school, and which were a consequence of an attempt to understand the world by nice mathematical proportions,  remained a curiosity. 
This paper addresses this curiosity, by describing a mathematical model of musical sound, along with a mathematical definition of consonance. First, we define pure, complex and mixed tones as mathematical models of musical sound. 
By a sequence of numerical experiments and analytic calculations, we show that  {\sl continuous cosine similarity}, abbreviated as cosim,  applied to these models quantifies the elusive concept of consonance as a frequency ratio which gives a local maximum of the cosim function. We prove that these maxima occur at the ratios considered as consonant in classical music theory. Moreover, we provide a simple explanation why the number of musical intervals  considered as consonant by musicians is finite, but has been increasing over the centuries. Specifically, our  formulas show that the number of consonant intervals changes with the depth of the tone (the number of harmonics present).  
\end{abstract}

\keywords{Consonance, Pure Tone, Complex Tone, Cosine Similarity, Consonance Curve.}

\maketitle
\section{Introduction}

The notion of consonance has been a fundamental concept in the study of music since antiquity. The Pythagorean School defined consonant musical intervals as those which result from vibrating string lengths (of same tension) adhered to simple mathematical proportions, known as \emph{Pythagorean Ratios}, involving the numbers one to four, with particular emphasis on the perfect fifth (3:2) and the perfect fourth (4:3).

In this paper, we revisit the Pythagorean foundations of musical consonance  (and dissonance - the lack of consonance). To achieve this, we begin with precise mathematical definitions of a \emph{pure tone} (a single sine wave) and a \emph{complex tone} (a pure tone with a finite number of successive harmonics of varying amplitudes). Applying cosine similarity \footnote{Loosely speaking, this is an uncentered version of the well known Pearson coefficient from statistics, applied to a sample of a deterministic musical tone.} and its continuous analogue (abbreviated `Cosim') to a pair of pure (or complex) tones of frequencies $f$ and $g$, we 
define the function $\cons(f,g)$ which gives a measure of consonance of the two tones (the larger the value of the function, the more consonant the tones $f$ and $g$ are).
Next, letting $r$ be a ratio in the interval $[1,2]$, we consider $\cons(f,rf)$, as a a function of $r$. 

The main contribution of this paper is summarized in Theorem \ref{maintheorem}, where we show that the local maxima of $\cons(f,rf)$ occur precisely at the frequency ratios identified by the Pythagorean school and reflected in the Plomp and Levelt consonance curve~\cite{Plomp65} (obtained as a result of a number of psychoacoustic experiments on a cohort of people). Additionally, we establish a connection between the number of these maxima and the \emph{depth} of a complex tone, understood as the number of harmonics present. More specifically, we observe that as the depth increases, the perfect fifth appears first (when just a few harmonics are present), followed by the perfect fourth, with other ratios emerging at still greater depth. Furthermore, Theorem \ref{secondtheorem} establishes that at a given depth of a complex tone, these maxima are visible over a finite range of frequencies and disappear beyond a certain threshold.

The topic of consonance has been explored from a modern perspective by W. Sethares in \cite{Sethares1993} and further developed in his insightful book, which leverages contemporary scientific methods \cite{Sethares2008}. For an accessible introduction to the connections between mathematics and music, we recommend the article by C. Nolane \cite{Nolane}.

{\bf Organization of the paper.}  In \S2, we collect the various preliminary concepts. These include the definitions of pure and complex tone and the concept of (continuous) cosine similarity.  
In \S3, we present the numerical experiments showing how this notion essentially recovers the consonance curves of Plomp and Levelt. A testimony to the fact that our $\cosim$ measure is a good quantification of the notion of consonance 
is given by the fact that we do obtain maxima for the values $r=\displaystyle\frac{3}{2}$ (perfect fifth)
and $r=\displaystyle\frac{4}{3}$ (perfect fourth), as well as the other ratios considered consonant in music. 
In section \S4 the main results are presented: it is demonstrated how the consonance measure based on continuous cosine similarity of pure and complex tones is maximized by the frequency ratios considered as consonant in music theory. These results consider the continuous model, independent of starting frequency and sampling rate.
In \S5 we apply our theory to the simplest musical instrument - a single string with is either plucked or hit.
In \S6 we present concluding remarks.

%
%
%
\section{Preliminaries}

In this section we introduce a collection of notions which we consider as `folklore', but which (to the best of our knowledge) have not appeared together before as a set of formal definitions. 

We define a pure tone as a sound with a sinusoidal wave form, that is a sound wave with constant frequency, amplitude and phase-shift.

\medskip 

{\bf Definition 1} 
{\sl A pure tone}  is the following function:
\begin{equation}
 w(f,t) = \sin\left(2\pi f t + \phi\right),   \qquad  0\le t\le T \end{equation}\label{Pure_Tone}
where
\begin{itemize}
\item $t$ is real variable (`time')
\item $f$ is a positive real parameter (`frequency')
\item $\phi$ is a real parameter (`phase shift')
\item $T$ is a positive parameter ('duration of the tone')
\end{itemize}

\medskip

In acoustic applications  $f$ is a frequency from the human audible range, i. e. $20$ Hz --- $20 000$ Hz and the 
phase shift $\phi$ is zero. A graph of a pure tone is displayed in Figure \ref{fig:Tone_Cosim}(a).

\medskip

{\bf Definition 2}
{\sl A harmonic} of a pure tone  with frequency $f$ is a pure tone with  frequency $nf$, where $n$ is a positive integer.

 We define {\sl a complex tone}\footnote{Note that this concept is used in a more restricted sense than in \cite{Roederer2009}} of {\sl depth} $N$, a positive integer, as the algebraic sum of pure tones with frequencies $f, 2f, \dots, N f$. Hence we have:
 \begin{equation}
     w_c(f,N,A,t) = \sum_{n=1}^N a_n\, w(nf,t),    
 \end{equation}\label{Complex_Tone}
 where $A=[a_1,\dots,a_n]$ is a vector with positive entries (`amplitudes of the harmonics').

Often, we take $A=[1,1,\dots,1]$, and omit $A$  as a parameter of $w_c$.
Moreover, when also the depth is equal to one, the second definition reduces to the first.

We show that, in order to observe the phenomena we describe below, one does not need to choose the value of $T$ (`duration') too precisely.
However, the results are most visible if one chooses 
$T$ to be about $4/f$, where $f$ is the lowest frequency present in the complex tone.

\medskip

{\bf Definition 3}  We define  {\sl a mixed tone} with amplitude vector 
$A=[a_1,\dots,a_n]$ and frequency vector $F=[f_1,\dots,f_n]$
as the following sum.
\begin{equation}
    w_m(F,N,A,t) = \sum_{n=1}^N a_n\, w(f_n,t)       \label{Mixed_Tone}
\end{equation}

\medskip

{\sl The cosine similarity} of the vector $X  = [x_0,\dots, x_K]$ and $Y= [y_0,\dots, y_K]$ is given by the following formula:

\[\cosim{(X,Y)} = \frac{\sum_{k=1}^K  x_k y_k}{\sqrt{\sum_{k=1}^K  x_k^2}\,\sqrt{  \sum_{k=1}^K  y_k^2 }}      \]

It follows from the Cauchy--Schwartz inequality that cosine similarity (and its continuous version) takes values in the interval $[-1,1]$.

\begin{example} Consider the vectors
 \[ X=[6,6,3],\qquad Y=[-6,-6,-3],\qquad\textrm{and}\qquad Z=[1,-1,0] \]
 A short computation shows that $\cosim(X,X)=1$, $\cosim(X,Y)=-1$, and $\cosim(X,Z)=0$. 
\end{example}

\textbf{Definition 4.}\label{cosine_similarity}
The continuous cosine similarity of functions $u,v:[0,T]\to \mathbb{R}$ is defined as:
\begin{equation}\label{Cosim}
\cosim{(u,v)} = 
\frac{\int_0^T  u(t) v(t)\,dt}{\sqrt{\int_0^T  u^2(t)\,d t }\sqrt{\int_0^T  v^2(t)\,d t}}      
\end{equation}


\section{Numerical experiments: cosine similarity recovers the Plomp--Levelt model}

In order to conduct numerical experiments, we will consider sampling  of a  pure tone of a certain sampling rate. 
Hence we choose (a large) integer $s$ (the sampling rate) and compute the values of the functions $w$ and $w_c$ at the points
\[      t_i = i*\frac{T}{s}, \qquad\textrm{for}\quad i=0,\dots,s          \]
In the experiments below, we use s= 44000.

Given pure tones $w_1$ and $w_2$, we can form their sample vectors:

\[    X_1  = [w_1(t_0), w_1(t_1),\dots, w_1(t_s)]        \]
\[    X_2  = [w_2(t_0), w_2(t_1),\dots, w_2(t_s)]           \]

By cosine similarity of two pure tones we mean the cosine similarity of the vectors $X_1$ and $X_2$ obtained by sampling these tones at a given rate over a certain interval.

For example, if $f=440$ and $f_1=660$, then their cosine similarity is $cs\approx 0.002$.

On the other hand, if  $f=440$ and $f_1=700$, then their cosine similarity is  $cs\approx 0.039$.

We introduce a frequency ratio parameter $r$ as a real number from the interval $[1,2]$, 
this allows us to focus our study of consonance on two tones of frequency $f$ and $rf$.

Figure \ref{fig:Tone_Cosim}(b) displays the cosine similarity of  $f=440$ Hz and $r*f$, for $r\in [1,2]$

\begin{figure}
    \begin{subfigure}{0.49\textwidth}
	   \begin{center}
            \centering
            \includegraphics[width=\linewidth]{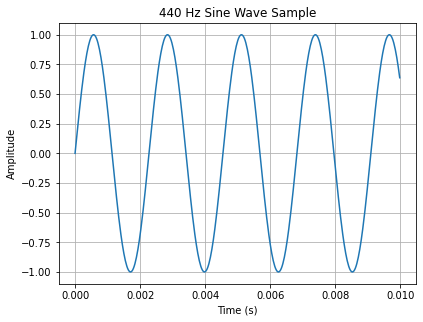}
    \end{center}
    \end{subfigure}
    \begin{subfigure}{0.49\textwidth}
        \begin{center}
            \centering
            \includegraphics[width=\linewidth]{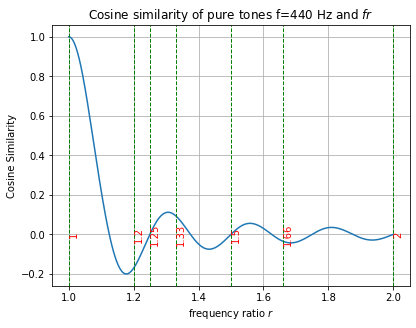}
        \end{center}
    \end{subfigure}
\caption{(a) Left: Graph of a pure tone. (b)~Right: Cosine similarity of a pure tones $f$ and $rf$ for $r$ ranging between one and two.} 
\label{fig:Tone_Cosim}
\end{figure}

 We define the consonance of two discrete complex tones of frequencies $f$ and $g$ and of depth $N$  as the  cosine similarity of the two vectors $X$ and $Y$ discretizing the complex tones   $w_c(f,N,t)$ and $w_c(g,N,t)$ :
\[   \cons (f, g, N) =  \cosim\left(X,Y \right)   \]

 By choosing different amplitude vectors of the harmonics, we model different instruments (see Sethares\cite{Sethares2008}, Chapter 2). However, in our numerical experiments in this section we take all the weights equal to one. We will often consider complex tones of depth $N=6$, which seems to be a reasonable choice for many standard musical instruments. 

 Figure \ref{fig:Cosine_Similarity} displays the consonance of discrete complex tones of frequencies $f$ and $rf$ for depth $N\in\{3,4,5,6\}$.

\begin{figure}
    \begin{subfigure}{0.49\textwidth}
            \centering
            \includegraphics[width=\linewidth]{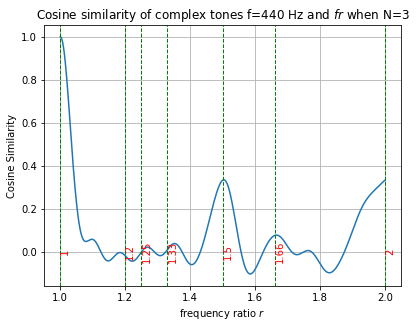}
    \end{subfigure}    
    \begin{subfigure}{0.49\textwidth}
            \centering
            \includegraphics[width=\linewidth]{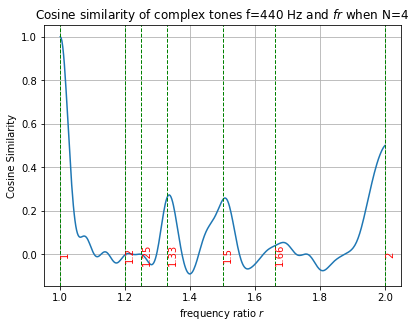}
    \end{subfigure} 
    \begin{subfigure}{0.49\textwidth}
        \begin{center}
            \centering
            \includegraphics[width=\linewidth]{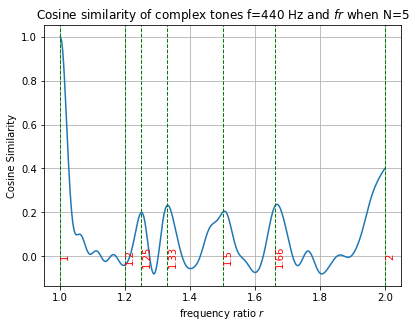}
        \end{center}
    \end{subfigure}    
    \begin{subfigure}{0.49\textwidth}
        \begin{center}
            \centering
            \includegraphics[width=\linewidth]{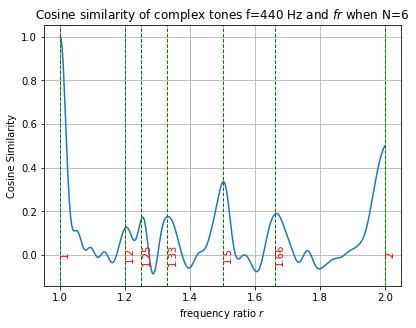}
        \end{center}
    \end{subfigure}    
    \caption{The consonance of samples of complex tones $f$ and $rf$ of depth $N$ ranging from three to six (compare to Figure~\ref{fig:Continuous_Cosine_Similarity} presenting the continuous counterparts in place of finite samples).}
    \label{fig:Cosine_Similarity}
\end{figure}

Our consonance curve for depth $N=6$ is stikingly similar to the consonance curve of Plomp and Levelt displayed in Figure \ref{fig:Plomp--Levelt} obtained by psychoacoustic experimentation.

\begin{figure}
\begin{center}
        \centering
        \includegraphics[width=0.8\linewidth]{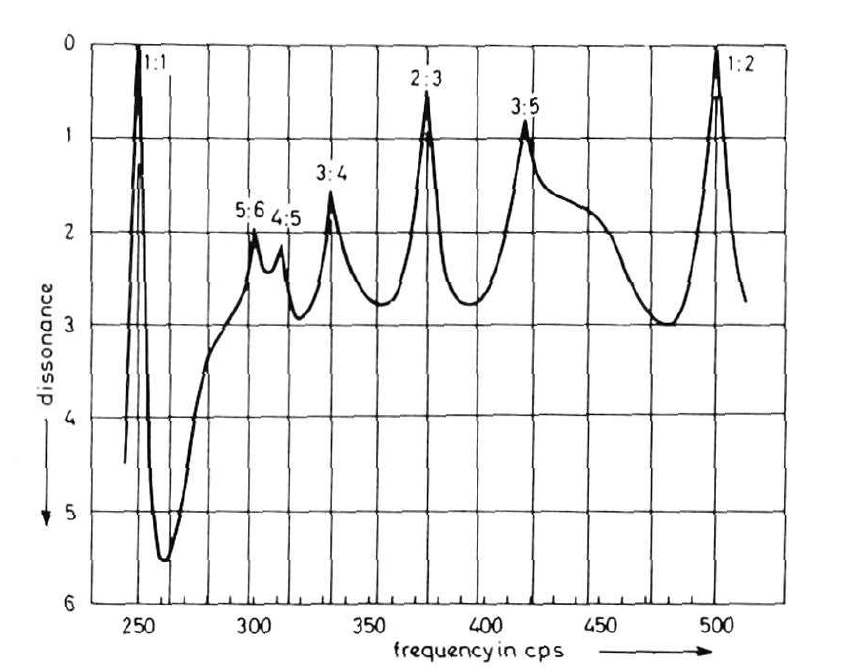}
    \end{center}
\caption{The Plomp--Levelt consonance diagram, taken from~\cite{Plomp65}} 
\label{fig:Plomp--Levelt}
\end{figure}

\section{Analytic Description of consonance of complex tones}	

Recall that we have defined a pure tone $w$  of frequency $f$ and a complex tone of same frequency and depth $N$ by the formulas (see Defintions 1 and 2):
\[ w(f,t) = \sin\left(2\pi f t \right),   \qquad  0\le t\le T    \]
\[    w_c(f,N,A,t) = \sum_{n=1}^N a_n\, w(nf,t),           \]
and decided to omit $A$ from notation if all $a_i$ are equal to one.

We define the consonance of complex tones of depth $N$ and frequencies $f$ and $g$ as their continuous cosine similarity (see formula \ref{cosine_similarity}) 

\begin{equation}\label{eq:paircor}
		\cons(f,g,N) = \cosim \left(w_c(f,N,A,t), w_c(g,N,B,t) \right)
\end{equation}

When the depth $N$ is equal to one, we will omit it from notation.

The results for depth $N$ ranging between three and six and for $f=440$ are shown in Figure~\ref{fig:Continuous_Cosine_Similarity}.

\begin{figure}
    \begin{subfigure}{0.49\textwidth}
            \centering
            \includegraphics[width=\linewidth]{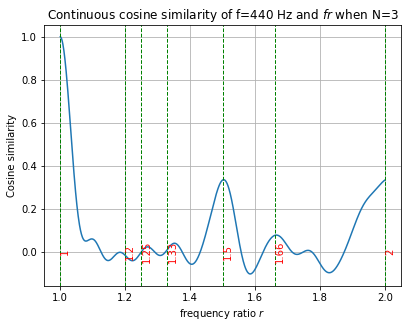}
    \end{subfigure}    
    \begin{subfigure}{0.49\textwidth}
            \centering
            \includegraphics[width=\linewidth]{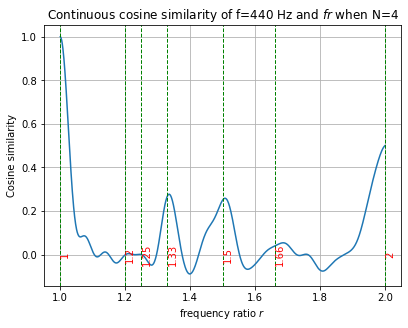}
    \end{subfigure} 
    \begin{subfigure}{0.49\textwidth}
        \begin{center}
            \centering
            \includegraphics[width=\linewidth]{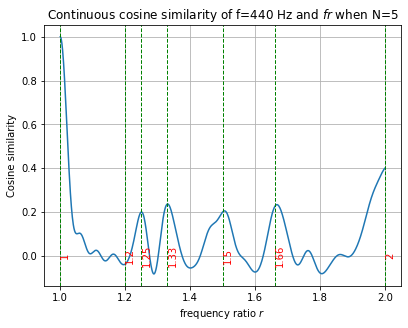}
        \end{center}
    \end{subfigure}    
    \begin{subfigure}{0.49\textwidth}
        \begin{center}
            \centering
            \includegraphics[width=\linewidth]{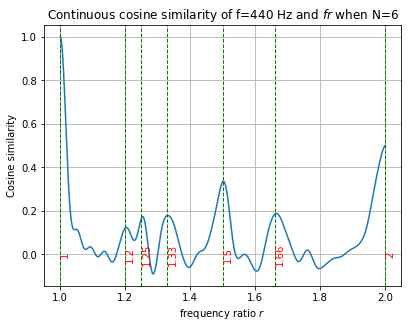}
        \end{center}
    \end{subfigure}    
    \caption{Continuous cosine similarity of complex tones with frequencies $f$ and $rf$ for depth $N$ ranging from three to six. Note the similarity to Figure~\ref{fig:Cosine_Similarity} containing the sampled version of the curves.}
    \label{fig:Continuous_Cosine_Similarity}
\end{figure}


It is clearly visible that the consonance function distinguishes by its maxima the consonant intervals of the minor third (m3) with the frequency ratio in the just intonation equal to $r=6/5$, the major third (M3) with the ratio $r=5/4$, the perfect fourth (P4) and fifth (P5) with the frequency ratios, respectively, $r=4/3$ and $3/2$, and the major sixth (M6) with the frequency ratio $5/3$, in addition to the fundamental ones of the perfect unison (P1) and the perfect octave (P8) with the ratios $1/1$ and $2/1$.

\medskip

To understand the origin of the maxima of the consonance curve, let us analyse in more detail the consonance of the pure tones  $w(nf,t)$ and $w(mrf,t)$, where $r$ is a number from the interval $[1,2]$, i.e.
\begin{equation}\label{eq:paircor_puretones}
		\cons(nf,mrf) = \cosim \left(w(nf,t), w(mrf,t) \right)
\end{equation}

\begin{lemma}
For positive integers $n$ and $m$: 

\begin{equation*}
	   \int_0^T  w(nf,t)w(mrf,t)\,dt = \frac{1}{{4\pi f}}\left( {\frac{{\sin \big( {2\pi f T\left( {rm - n} \right)} \big)}}{{rm - n}} - \frac{{\sin \big( {2\pi f T \left( {rm + n} \right)} \big)}}{{rm + n}}} \right)
\end{equation*}
\begin{equation*}\label{eq:pure_tone_stanard_dev}
\left|\int_0^T  w(nf,t)^2\,dt - \frac{T}{2}\right| \le  \frac{1}{8\pi nf}
\end{equation*}
\end{lemma}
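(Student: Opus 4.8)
The plan is to compute both integrals directly from the explicit form $w(f,t) = \sin(2\pi f t)$ (with $\phi = 0$ as fixed at the start of \S4), using only elementary trigonometric identities. Neither integral requires anything beyond product-to-sum and half-angle formulas, so the work is routine once the bookkeeping is set up carefully.

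For the first identity I would substitute $w(nf,t) = \sin(2\pi n f t)$ and $w(mrf,t) = \sin(2\pi m r f t)$ and apply $\sin A \sin B = \tfrac12[\cos(A-B) - \cos(A+B)]$, which turns the integrand into
\[
\tfrac12\bigl[\cos(2\pi f t(mr - n)) - \cos(2\pi f t(mr + n))\bigr].
\]
Each cosine integrates elementarily to a sine divided by its angular frequency, giving
\[
\frac{1}{4\pi f}\left(\frac{\sin(2\pi f T(mr-n))}{mr-n} - \frac{\sin(2\pi f T(mr+n))}{mr+n}\right),
\]
which is exactly the claimed expression. One may equally well write the difference angle as $n - mr$; the two forms reconcile because cosine is even before integration and sine is odd afterward, so the sign in the denominator and the sign inside the sine cancel.

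For the second identity I would use $\sin^2 x = \tfrac12(1 - \cos 2x)$ with $x = 2\pi n f t$, so that
\[
\int_0^T w(nf,t)^2\,dt = \frac{T}{2} - \frac{1}{2}\int_0^T \cos(4\pi n f t)\,dt = \frac{T}{2} - \frac{\sin(4\pi n f T)}{8\pi n f}.
\]
The deviation from $T/2$ is therefore exactly $-\sin(4\pi n f T)/(8\pi n f)$, and bounding $|\sin|\le 1$ yields the stated estimate immediately.

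I do not expect a genuine obstacle here; the single point that warrants care is the degenerate case $mr = n$ in the first identity, where the first cosine becomes $\cos 0 = 1$ and its integral equals $T$ rather than the indeterminate $\sin(0)/0$. Since the intended application takes $r\in[1,2]$ generic and treats the resonant ratios $r = n/m$ as a separate (limiting) case, this edge case does not affect the argument, and the formula as written is understood for $mr \ne n$.
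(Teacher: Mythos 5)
Your proposal is correct and follows essentially the same route as the paper: product-to-sum for the cross term, then termwise integration, and a half-angle reduction of $\sin^2$ for the diagonal term (the paper substitutes $u=2\pi nft$ first, but this is the same computation). Your explicit remark about the degenerate case $mr=n$ is a sensible addition that the paper defers to its later discussion of the $\sin(x)/x$ singularity.
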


\begin{proof}
The appropriate integrals are evaluated in a standard way.
\begin{equation*}
    \begin{split}
        \int_0^T  w(nf,t)w(mrf,t)\,dt 
        &= \int_0^T  \sin\left(2\pi nft\right)\sin\left(2\pi mrft\right)\,dt \\
        &= \frac{1}{2}\int_0^T  \cos\left(2\pi mrft-2\pi nft\right)
        - \cos\left(2\pi mrft+2\pi nft\right)\,dt \\
        &= \frac{1}{2}\int_0^T  \cos [2\pi f\left( mr- n\right)t]
        - \cos [2\pi f \left( mr+ n\right)t]\,dt \\
        &= \frac{1}{4\pi f} \left\{\frac {\sin [2\pi f\,T\left( mr- n\right)]}{mr-n}
        - \frac{\sin [2\pi f\,T \left( mr+ n\right)]}{mr+n} \right\}
    \end{split}
\end{equation*}

Above, we have used the formula $\sin\phi \sin \theta = \cos(\phi-\theta)-\cos(\phi+\theta)$.

For the second inequality, notice that

\begin{equation*}\label{eq:pure_tone_stanard_dev}
    \begin{split}
        \int_0^T  w(nf,t)^2\,dt 
        &= \frac{1}{2\pi n f} \int_0^{2\pi nf\,T} \sin^2u\,du\\
        &= \frac{1}{2\pi n f}  \left(\frac{u}{2}-\frac{1}{4}\sin 2u\right) \bigg|_0^{2\pi nf\,T}\\
        &= \frac{T}{2} + \frac{1}{8\pi nf} \sin(4\pi n  f\, T)
     \end{split}
\end{equation*}

\end{proof}

\begin{corollary}
For positive integers $m\le n$: 
\begin{equation*}\label{eq:paircor1}
	\left|\cons(nf,mrf) - \frac{1}{{2\pi f\,T}}\left( {\frac{{\sin \big( {2\pi f\, T\left( {rm - n} \right)} \big)}}{{rm - n}} - \frac{{\sin \big( {2\pi f\, T \left( {rm + n} \right)} \big)}}{{rm + n}}} \right)\right|
    =\O\left(\frac{1}{nf}\right)
\end{equation*}
\end{corollary}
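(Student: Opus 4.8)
The plan is to read $\cons(nf,mrf)$ off as a normalised ratio and view the corollary as a statement about the stability of that ratio when its denominator is replaced by its leading value. Writing
\[
I=\int_0^T w(nf,t)\,w(mrf,t)\,dt,\qquad D_1=\int_0^T w(nf,t)^2\,dt,\qquad D_2=\int_0^T w(mrf,t)^2\,dt,
\]
the definition of $\cosim$ gives $\cons(nf,mrf)=I/\sqrt{D_1D_2}$, while the quantity subtracted inside the absolute value of the corollary is, by the first identity of the Lemma, exactly $2I/T=I/(T/2)$: it is the \emph{same} cross-integral $I$ divided by the leading value $T/2$ of the normalising factor instead of by $\sqrt{D_1D_2}$. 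So the corollary asserts that swapping $\sqrt{D_1D_2}$ for $T/2$ costs only $\O\!\left(\frac{1}{nf}\right)$, and the whole proof is an error-propagation estimate fed by the Lemma's second bound.

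First I would record, applying the Lemma's second estimate to the frequencies $nf$ and $mrf$, that $D_1=\tfrac T2+\delta_1$ and $D_2=\tfrac T2+\delta_2$ with $|\delta_1|\le\tfrac{1}{8\pi nf}$ and $|\delta_2|\le\tfrac{1}{8\pi mrf}$. In particular, for any admissible duration (e.g. $T>\tfrac{1}{4\pi f}$, which holds comfortably for the suggested $T\approx 4/f$) both $D_1,D_2>0$, so every square root below is legitimate. Then I would write
\[
\cons(nf,mrf)-\frac{2I}{T}=I\left(\frac{1}{\sqrt{D_1D_2}}-\frac{2}{T}\right)
\]
and use $|I|\le\sqrt{D_1D_2}$ — this is just Cauchy--Schwarz, equivalently the bound $|\cosim|\le 1$ already noted — to collapse the estimate to
\[
\left|\cons(nf,mrf)-\frac{2I}{T}\right|\le\left|1-\frac{2\sqrt{D_1D_2}}{T}\right|.
\]

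To finish, the key device is to clear the square root by multiplying through by the conjugate, which yields the exact identity
\[
1-\frac{2\sqrt{D_1D_2}}{T}=\frac{T^2-4D_1D_2}{T\left(T+2\sqrt{D_1D_2}\right)},
\]
whose denominator exceeds $T^2$ precisely because $D_1D_2>0$. Substituting $D_i=\tfrac T2+\delta_i$ expands the numerator as $T^2-4D_1D_2=-2T(\delta_1+\delta_2)-4\delta_1\delta_2$, giving
\[
\left|\cons(nf,mrf)-\frac{2I}{T}\right|\le\frac{2\bigl(|\delta_1|+|\delta_2|\bigr)}{T}+\frac{4|\delta_1||\delta_2|}{T^2},
\]
and inserting the Lemma's bounds makes the right-hand side at most $\frac{1}{4\pi Tf}\bigl(\tfrac1n+\tfrac1{mr}\bigr)+\frac{1}{16\pi^2T^2f^2nmr}$, which is of the claimed order $\O\!\left(\frac{1}{nf}\right)$ (the implied constant depending only on $T$, and the quadratic term being of strictly smaller order). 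Note that both self-normalisation errors, of sizes $\tfrac{1}{8\pi nf}$ and $\tfrac{1}{8\pi mrf}$, feed into the leading term.

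I expect the only genuine obstacle to be the error propagation through the square root in the denominator: one must avoid a naive linearisation of $x\mapsto 1/\sqrt{x}$ and instead use the conjugate-multiplication identity above, which keeps every step exact and elementary and, crucially, produces a denominator bounded below by $T^2$ so that no smallness of $D_1D_2$ can spoil the bound. Everything else is routine substitution of the Lemma's two estimates; the one point worth stating explicitly is the positivity $D_1,D_2>0$, which both licenses the square roots and guarantees the lower bound on $T+2\sqrt{D_1D_2}$.
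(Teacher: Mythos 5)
Your decomposition is exactly the one the paper intends: the subtracted expression equals $2I/T$ with $I$ the cross-integral from Lemma~1, so the corollary reduces to estimating the effect of replacing the normalizer $\sqrt{D_1D_2}$ by its leading value $T/2$, and your inputs (the Lemma's two estimates plus $|I|\le\sqrt{D_1D_2}$) are the same ones behind the paper's one-line ``routine calculation involving $1-x\le\frac{1}{1+x}$''. Your conjugate-multiplication identity is a cleaner, fully rigorous substitute for that linearization, and everything through your final displayed inequality is correct.

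The one point that does not go through is the very last assertion. Your bound is
\[
\frac{1}{4\pi Tf}\left(\frac1n+\frac1{mr}\right)+\frac{1}{16\pi^2T^2f^2nmr},
\]
and since $m\le n$ the term $\frac{1}{mr}$ is \emph{not} of order $\frac1n$: for $m=1$ and $n$ large it is of order one. So what you have actually proved is $\O\left(\frac{1}{mf}\right)$ (equivalently $\O\left(\frac{1}{f}\right)$ uniformly in $m,n$), not the stated $\O\left(\frac{1}{nf}\right)$. The loss comes from discarding $I$ via $|I|\le\sqrt{D_1D_2}$: to recover the $n$-dependence one should note that away from the singular ratio the Lemma gives $|I|=\O\left(\frac{1}{f}\right)$, so the error $I\left(\frac{1}{\sqrt{D_1D_2}}-\frac{2}{T}\right)$ is of smaller order there, while near $rm\approx n$ one has $\frac{1}{mr}\approx\frac{1}{n}$ and your bound becomes the claimed one. (Admittedly the corollary's exponent is loosely stated in the paper as well, and for its subsequent use --- dropping the denominators when $f\gg 0$ --- only the decay in $f$ matters; but as written your final step claims an order your estimate does not deliver.)
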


\begin{proof} Starting from the definition, we have
\[  \cons(nf,mrf) = \cosim(w(nf,t),w(mrf,t)) = \frac{\int_0^T  w(nf,t)w(mrf,t)\,dt}{\sqrt{\int_0^T  w(f,t)^2\,dt}\sqrt{\int_0^T  w(rf,t)^2\,dt}} 
\]
The result follows from a routine calculation involving the inequality $1-x\le \frac{1}{1+x}$.
\end{proof}

It follows that when looking for maxima (and minima) of the correlation function and $nf>>0$, we can focus on the function $AC$ (asymptotic consonance) defined as follows.

\begin{equation}\label{eq:Asymptotic_Consonance}
AC(nf, mrf) = \frac{1}{{2\pi f\,T}}\left( {\frac{{\sin \big( {2\pi f\, T\left( {rm - n} \right)} \big)}}{{rm - n}} - \frac{{\sin \big( {2\pi f\, T \left( {rm + n} \right)} \big)}}{{rm + n}}} \right)
\end{equation}

For fixed $f,T$ and $m\le n$, as a function of $r$, the function 
$AC$ is a linear combination of two sinusoidal functions with arguments 
\[2\pi f\, T\left( {rm - n}\right) \qquad\textrm{and}\qquad 2\pi f\, T\left( {rm + n} \right) \] 
and amplitudes proportional to $1/(rm-n)$ and $1/(rm+n)$. As long as $rm>n$, both denominators are strictly positive and the function takes the form of regular oscillations with decreasing amplitude (a consequence of the linear dependence on $r$ in the denominators). See the example run in Fig.\ref{fig:s44035_and_s44053}(a).

\begin{figure}
    \begin{subfigure}{0.49\textwidth}
	   \centering
	   \includegraphics[width=\linewidth,height=0.25\textheight]{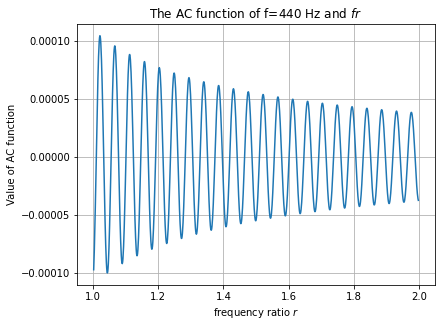}   
    \end{subfigure}
    \begin{subfigure}{0.49\textwidth}
	   \centering
	   \includegraphics[width=\linewidth,height=0.25\textheight]{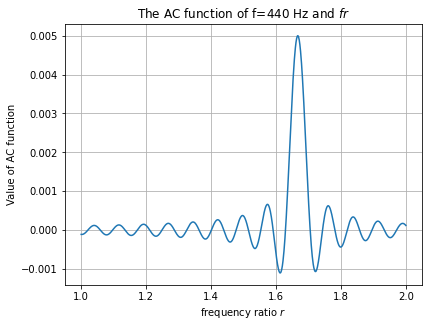}
    \end{subfigure}    
    \caption{  The function $AC$ for $f=440$ Hz and \newline
    (a) $n=3, m=5$ and (b)  $n=5, m=3$. }
    	\label{fig:s44035_and_s44053}
\end{figure}
The behavior changes when the denominator $rm-n$ can be zero (possible only when  $m \le n$). Recall that $r$ satisfies $1\le r\le 2$ (one octave). The first component in the formula (\ref{eq:Asymptotic_Consonance}) becomes  singular (with a singularity of type $\mathop {\lim }\limits_{x \to 0} \left( {\sin (x)/x} \right))$, which results in a finite value, but one that far exceeds the amplitude of regular oscillations for $rm\ne n$ such as in Fig.\ref{fig:s44035_and_s44053} (a). An example is shown in Fig.\ref{fig:s44035_and_s44053}(b).

We have arrived at the following theorem.

\begin{thm} \label{maintheorem}
Let $f$ be a positive number and $1\le r \le 2$. The local maxima of the consonance function $\cons(f,rf)$  (formula \ref{eq:paircor} and figure \ref{fig:Continuous_Cosine_Similarity}) are a sum of the individual ``singular" (i.e. with $m\le n$) pair consonances (\ref{eq:paircor_puretones}). They occur at $r=n/m$. For $1\le n, m \le 6$, we have the following list of maxima: 
\medskip
\begin{enumerate}
	\item $r=1/1=2/2=3/3=4/4=5/5=6/6$ (perfect unison P1)
	\item $r=6/5$ (minor third m3)
	\item $r=5/4$ (major third M3)
	\item $r=4/3$ (perfect fourth P4)
	\item $r=3/2 = 6/4$ (perfect fifth P5)
	\item $r=5/3$ (major sixth M6)
	\item $r=2=2/1=4/2=6/3$ (perfect octave P8)
\end{enumerate}

\end{thm}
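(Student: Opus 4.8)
The plan is to reduce the consonance of the two complex tones to a normalized sum of pairwise pure-tone consonances, and then to locate the maxima from the singular behaviour of the individual $AC$ terms isolated in the discussion above. First I would expand the numerator of $\cons(f,rf,N)$ bilinearly: writing $w_c(f,N,t)=\sum_{n=1}^N w(nf,t)$ and $w_c(rf,N,t)=\sum_{m=1}^N w(mrf,t)$, the inner product $\int_0^T w_c(f,N,t)\,w_c(rf,N,t)\,dt$ becomes the double sum $\sum_{n=1}^N\sum_{m=1}^N \int_0^T w(nf,t)\,w(mrf,t)\,dt$, each summand being given explicitly by the Lemma. For the two norms in the denominator I would combine the second estimate of the Lemma with the fact that the cross integrals $\int_0^T w(nf,t)\,w(n'f,t)\,dt$ with $n\ne n'$ are $\O(1/f)$ (the Lemma with $r=1$), obtaining $\int_0^T w_c(f,N,t)^2\,dt=\tfrac{N T}{2}+\O(1/f)$ and likewise for $rf$. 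Dividing and invoking the Corollary termwise gives
\[
\cons(f,rf,N)=\frac{1}{N}\sum_{n=1}^N\sum_{m=1}^N AC(nf,mrf)+\O\!\left(\tfrac{1}{fT}\right),
\]
which is the precise form of the assertion that $\cons(f,rf)$ is a sum of the individual pair consonances (\ref{eq:paircor_puretones}).

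Next I would split the $N^2$ terms $AC(nf,mrf)$ according to whether the first denominator $rm-n$ can vanish on $[1,2]$. If $m>n$ then $rm-n\ge m-n>0$ for all $r\in[1,2]$, so that term is a bounded oscillation of amplitude at most $\tfrac{1}{2\pi f T\,(m-n)}$; summing over the finitely many such regular pairs produces a smooth background of total size $\O(1/(fT))$, negligible once $fT$ is large. If $m\le n$ the first summand of (\ref{eq:Asymptotic_Consonance}) is singular at $r=n/m$, where $rm-n\to0$ and $\tfrac{\sin(2\pi f T(rm-n))}{rm-n}\to2\pi f T$, so $AC(nf,mrf)$ attains the value $1+\O(1/(nfT))$ -- a sharp local maximum of width of order $1/(mfT)$ that dwarfs the regular oscillations. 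The constraint $r\le2$ forces $n\le2m$, so the admissible singular pairs are exactly those with $m\le n\le2m$, and a ratio hit by $k$ of them carries a peak of height $\approx k/N$ in $\cons(f,rf,N)$.

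I would then argue that these peaks survive in the sum: near a prescribed $r_0=n_0/m_0$ the dominant contribution is the singular term (or terms) peaking there, every other term being either a bounded oscillation or a singular term peaked at a different, well-separated ratio; for $fT$ large the peaks are narrow and disjoint, except when several pairs reduce to the same ratio (e.g. $\tfrac32=\tfrac64$, or $2=\tfrac21=\tfrac42=\tfrac63$), in which case they reinforce one another. Hence the local maxima of $\cons(f,rf,N)$ sit precisely at the distinct values $r=n/m$ with $1\le m\le n\le2m$. Enumerating the pairs $(n,m)$ with $1\le m\le n\le N=6$ and $n\le2m$ and reducing to lowest terms then yields exactly the seven ratios $1,\ \tfrac65,\ \tfrac54,\ \tfrac43,\ \tfrac32,\ \tfrac53,\ 2$ recorded in the statement.

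The step I expect to be the main obstacle is this last one: making rigorous that the superposition of all the pair consonances neither erases a predicted peak nor manufactures a spurious one. One must bound uniformly on $[1,2]$ both the oscillatory background and the off-peak tails of the singular terms, and check that for the relevant range of $fT$ the peak height $\approx1$ of each singular $AC$ term genuinely exceeds this background everywhere except in a shrinking neighbourhood of the claimed ratios. This is exactly where the choice $T\approx4/f$ enters: it keeps the peaks resolvable while the harmonic frequencies $nf$ stay high enough for the $\O(1/(nf))$ error terms of the Corollary to be controlled.
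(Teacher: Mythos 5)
Your proposal follows essentially the same route as the paper: expand the numerator bilinearly, use the Lemma and the Corollary to replace each pair consonance by the asymptotic term $AC(nf,mrf)$ with denominator approximated by $NT/2$, observe that only the pairs with $m\le n$ (and $n\le 2m$, forced by $r\le 2$) produce the singular $\sin(x)/x$ peaks at $r=n/m$, and enumerate the resulting ratios for $N=6$ to obtain exactly the seven listed consonances. You are in fact more explicit than the paper about the one delicate point --- verifying that the superposition of all $N^2$ terms neither erases a predicted peak nor manufactures a spurious one --- which the paper leaves at the level of the informal discussion preceding the theorem.
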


\medskip
The above result identifies all the classical consonances except the minor sixth (m6) (see \cite{Roederer2009}, p. 171). It is also not visible on an experimentally motivated curve\footnote{The curve was generated using the programs designed by W. A. Sethares \cite{Sethares2008}} of Plomp and Levelt \cite{Plomp65} (see Fig.\refeq{fig:e44066}).
\begin{figure}[ht]
	\centering	\includegraphics[width=0.75\linewidth,height=0.3\textheight]{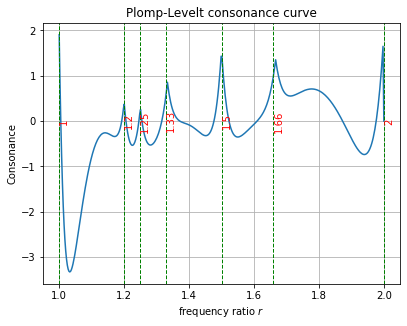}
	\caption{Experimentally motivated Plomp-Levelt consonance curve for $f=440$ Hz, $N=M=6$.}
	\label{fig:e44066}
\end{figure}
The minor sixth interval corresponds to the irreducible fraction $8/5$, which according to our reasoning requires correlations of, at least, eight harmonics rarely significantly present in real musical timbres.

Next, let's turn our attention to complex tones of depth $N\ge 1$, i.e.

\[    w_c(f,N,A,t) = \sum_{n=1}^N a_n\, w(nf,t),        \]

The next lemma shows that the denominators present in the cosine similarity formula play little role in our considerations under appropriate assumptions on $f$ and $N$.

\begin{lemma}\label{bound_on_wc2} Let $a_n\in[0,1]$ for $n=1,\dots,N$.
We have
\[\left|\int_0^T w_c(f,N,A,t)^2\, dt - \frac{NT}{2}\right| \le \frac{N^2}{2\pi f}
\]
\end{lemma}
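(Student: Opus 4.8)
The plan is to expand the square of the complex tone into a double sum over pairs of harmonics and integrate term by term, invoking Lemma 1 to control each contribution. Writing $w_c(f,N,A,t)=\sum_{n=1}^N a_n\,w(nf,t)$, we have
\[
\int_0^T w_c(f,N,A,t)^2\,dt = \sum_{n=1}^N\sum_{m=1}^N a_n a_m \int_0^T w(nf,t)\,w(mf,t)\,dt,
\]
and I would separate the diagonal ($n=m$) from the off-diagonal ($n\neq m$) terms. The diagonal terms carry the bulk of the integral: by the second estimate of Lemma 1, each $\int_0^T w(nf,t)^2\,dt$ equals $T/2$ up to an error of at most $1/(8\pi nf)$, so the diagonal sum is $\tfrac{T}{2}\sum_n a_n^2$ plus a correction bounded by $\sum_n a_n^2/(8\pi nf)$. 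With unit amplitudes (the case used throughout), $\sum_n a_n^2 = N$ recovers the claimed main term $NT/2$.

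Next I would bound the off-diagonal block. For $n\neq m$ the first formula of Lemma 1 (specialized to $r=1$) gives
\[
\int_0^T w(nf,t)\,w(mf,t)\,dt = \frac{1}{4\pi f}\left(\frac{\sin\big(2\pi fT(m-n)\big)}{m-n} - \frac{\sin\big(2\pi fT(m+n)\big)}{m+n}\right),
\]
and since $|m-n|\ge 1$ and $m+n\ge 1$ while each sine is at most $1$ in absolute value, every such integral is bounded by $1/(2\pi f)$. Here the hypothesis $a_n\in[0,1]$ enters only through $a_n a_m\le 1$, which lets me discard the amplitudes in the error estimate. As there are $N(N-1)$ off-diagonal pairs, the whole off-diagonal contribution is bounded by $N(N-1)/(2\pi f)$.

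Assembling the pieces with the triangle inequality and the crude estimate $\sum_{n=1}^N 1/(8\pi nf)\le N/(8\pi f)$ for the diagonal error, I obtain
\[
\left|\int_0^T w_c^2\,dt - \frac{NT}{2}\right| \le \frac{N}{8\pi f} + \frac{N(N-1)}{2\pi f} = \frac{1}{2\pi f}\left(\frac{N}{4} + N(N-1)\right) \le \frac{N^2}{2\pi f},
\]
where the final inequality holds because $\tfrac{N}{4} + N(N-1) = N^2 - \tfrac{3N}{4} \le N^2$. The arithmetic is deliberately wasteful (using $1/n\le 1$ and $|\sin|\le 1$), which is acceptable given the slack in the stated bound.

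I expect the main obstacle to be the off-diagonal block rather than any single integral. There are on the order of $N^2$ cross terms, and although each is individually of size $O(1/f)$, one must verify that the potentially dangerous near-diagonal terms — where $m-n$ is small — do not blow up; this is exactly what the uniform bound $|m-n|\ge 1$ guarantees, so the per-term estimate $1/(2\pi f)$ survives summation over all $N(N-1)$ pairs and still lands inside the target $N^2/(2\pi f)$. The diagonal side is routine, the only subtlety being that the main term $NT/2$ requires the diagonal amplitudes to be taken equal to one.
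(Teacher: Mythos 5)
Your proof is correct and follows essentially the same route as the paper's: expand the square into a double sum, bound the diagonal terms by the second part of Lemma 1 and the off-diagonal terms by the first part with $r=1$, and combine. Your remark that the main term $\frac{NT}{2}$ really requires the diagonal amplitudes to equal one (otherwise it should be $\frac{T}{2}\sum_n a_n^2$) is a fair observation that the paper's own proof also glosses over.
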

\begin{proof}
We are looking for a bound on the integral of $w_c^2$,
that is the following expression
\[ \sum_{n=1}^N \sum_{m=1}^N \int_0^T a_n a_m w(nf,t) w(mf,t)\,dt  \]
We obtain the desired bound by looking at the diagonal (i.e. those, where $n=m$) and the non-diagonal entries in the summation, and applying the first and second part of the earlier lemma.
For the diagonal entries, we have
\[\frac{NT}{2} - \frac{N}{8\pi f} \le \sum_{n=1}^N  \int_0^T a^2_n  w^2(nf,t) \,dt \le \frac{NT}{2}+ \frac{N}{8\pi f} \]

For the non-diagonal elements, we have
\[ -\frac{N(N-1)}{2\pi f} \le \sum_{n,m=1,n\neq m}^N \int_0^T a_n a_m w(nf,t) w(mf,t)\,dt \le \frac{N(N-1)}{2\pi f} \]
By adding these two sets on inequalities we obtain the bound in the statement of the lemma.
\end{proof}

From this lemma we conclude that when $N^2<<2\pi f$,
which does hold in the cases we consider, the product of square roots which appears in the cosine similarity formula is well approximated by $\frac{NT}{2}$.
Hence the behaviour of the $\cosim$ function is determined by the numerator. From the linearity of the integral and distributivity of multiplication with respect to addition we conclude that the intergral 
\[  \int_0^T w_c(f,t) w_c(rf,t)\, dt \]
is the linear combination of the terms
\[  \int_0^T w(nf,t) w(mf, t)\, dt \]
we have considered earlier. 

This establishes the following result.

\begin{thm}\label{secondtheorem}
Consider two complex tones of frequencies $f$ and $rf$, where $1\le r\le 2$.
\begin{itemize}
    \item As their depth $N$ increases to infinity, so does the number of maxima of the asymptotic consonance function.
    \item If their depth $N$ is assumed to be fixed, as $f$ increases to infinity,  the maxima of the consonance function disappear. 
\end{itemize}
\end{thm}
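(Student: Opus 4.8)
The plan is to reduce both statements to the structure of the asymptotic consonance $AC$ from~\eqref{eq:Asymptotic_Consonance}, using Lemma~\ref{bound_on_wc2} and the Corollary to replace $\cons$ by its asymptotic form whenever $f$ is large compared with $N^2$. For the first bullet I would argue, as in Theorem~\ref{maintheorem}, that for a complex tone of depth $N$ the local maxima of the consonance curve come exactly from the singular pairs $(n,m)$ with $rm=n$ and $1\le m\le n\le N$, which sit at the ratios $r=n/m\in[1,2]$. Writing such an $r$ in lowest terms as $p/q$, the ratio is realized at depth $N$ precisely when its numerator satisfies $p\le N$ (since $q\le p\le N$ then suffices, and $p\le N$ is necessary for the pair $(p,q)$ to occur at all). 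These ratio sets are nested and non-decreasing in $N$, so no maximum is ever lost as $N$ grows, and to see the count is unbounded I would exhibit the explicit family $r=p/(p-1)$ for $p=2,3,\dots,N$: consecutive integers are coprime and $p/(p-1)\le 2$ for $p\ge 2$, so these are $N-1$ distinct irreducible fractions in $(1,2]$, each with numerator $\le N$. Hence the number of maxima is at least $N-1$ and tends to infinity.

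For the second bullet the subtle point is what ``disappear'' should mean, and this is where I expect the main obstacle to lie. At a fixed consonant ratio $r=p/q$ the singular $\mathrm{sinc}$-type term attains the value $1$ independently of $f$ (it is the self-similarity of two identical pure tones), so the peak \emph{height} does not tend to zero and a local maximum technically survives for every $f$; what changes is the \emph{width}, since near $r=p/q$ the dominant term behaves like $\dfrac{\sin\!\big(2\pi f T q\,(r-p/q)\big)}{2\pi f T q\,(r-p/q)}$, whose main lobe has half-width of order $1/(fT)$ and collapses as $f\to\infty$. I would therefore make ``disappear'' precise in a measure-theoretic sense: for fixed depth $N$ and fixed $T$, for every $\varepsilon>0$ the Lebesgue measure of the super-level set $\{\,r\in[1,2]:|\cons(f,rf,N)|>\varepsilon\,\}$ tends to $0$ as $f\to\infty$, so the regions of appreciable consonance shrink to a set of measure zero and the curve flattens to $0$ almost everywhere.

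To prove this I would proceed in three steps. First, since $N$ is fixed while $f\to\infty$, the hypothesis $N^2\ll 2\pi f$ of Lemma~\ref{bound_on_wc2} eventually holds, so the product of square roots in the denominator equals $\tfrac{NT}{2}\,(1+o(1))$; together with the Corollary this lets me replace $\cons(f,rf,N)$ by a finite linear combination $\sum_{n,m=1}^N c_{nm}\,AC(nf,mrf)$ with coefficients bounded independently of $f$, the replacement error tending to $0$ uniformly in $r$. Second, away from the finite set of singular ratios $r=n/m$ each term of this sum is bounded in absolute value by $\dfrac{1}{2\pi f T\,|rm-n|}$, so on any fixed closed set avoiding all singular ratios the whole sum is $O(1/f)$ and falls below $\varepsilon$ once $f$ is large. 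Third, around each of the finitely many singular ratios I would bound the measure of the set where the main lobe keeps the sum above $\varepsilon$: the lobe on which $|\mathrm{sinc}|>c(\varepsilon)$ has width $O\!\big(1/(fT)\big)$, and summing over the finitely many singular ratios gives total measure $O\!\big(1/(fT)\big)\to 0$. Adding the two contributions yields the claimed decay of the super-level set. The one genuinely delicate estimate is the uniform control of the remainder near a singularity, where the crude bound $\tfrac{1}{2\pi f T|rm-n|}$ blows up; there I would keep the exact $\tfrac{\sin X}{X}$ form, which is always bounded by $1$, and invoke the coarse bound only at distance $\gtrsim 1/(fT)$ from each singular ratio.
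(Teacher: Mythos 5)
Your reduction is the same one the paper uses: Lemma~\ref{bound_on_wc2} flattens the denominator to $NT/2$ once $N^2\ll 2\pi f$, the numerator splits into the $N^2$ pure-tone integrals of Lemma~1, and everything hinges on the singular pairs $rm=n$ of the asymptotic consonance~(\ref{eq:Asymptotic_Consonance}). The paper essentially stops there (``this establishes the following result''), so your proposal supplies details the printed argument omits. For the first bullet your counting is exactly the missing step: an irreducible $p/q\in[1,2]$ is singular at depth $N$ iff $p\le N$, and the family $p/(p-1)$, $p=2,\dots,N$, yields at least $N-1$ distinct maxima. For the second bullet you have correctly identified a defect of the \emph{statement} rather than of your proof: at a singular ratio $r=p/q$ the normalized peak value is about $\lfloor N/p\rfloor\,(T/2)\big/(NT/2)=\lfloor N/p\rfloor/N$, independent of $f$, so for fixed $T$ the local maxima do not literally vanish as $f\to\infty$; only their width, of order $1/(fT)$, collapses. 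Your measure-theoretic reformulation (the superlevel sets $\{r:|\cons(f,rf,N)|>\varepsilon\}$ have measure tending to zero) is a correct and provable way to make ``disappear'' precise, and your three-step estimate --- a uniform $O(1/f)$ bound outside a $\delta$-neighbourhood of the finitely many singular ratios, the bound $|\sin X/X|\le 1$ inside, and the $O(1/(fT))$ lobe width --- goes through. Do flag explicitly that this is a reinterpretation: read as ``the local maxima cease to exist'' or ``their heights tend to zero'', the claim is false for fixed $T$, and the paper gives no argument for it; note also that under the paper's own recommendation $T\approx 4/f$ the product $fT$ is constant and even the widths do not shrink, so the result is meaningful only with $T$ held fixed, as you assume.
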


In the earlier numerical experiments, we assumed that all the harmonics had the same amplitude. For the majority of musical instruments, higher harmonics have lower amplitudes. It is not hard to describe the situation in which the amplitude of each successive harmonics decreases by a constant factor $d\in(0,1)$ relative to the previous one. 
Hence we consider a complex tone of the following form:
\[  w_c(f,N,A,t)=\sum_{n=1}^N d^n \sin(2\pi n f t)   \]

Hence the numerator of the expression giving the $\cosim$ function of 
$w_c(f,N,A,t)$  and $w_c(rf,N,A,t)$ is given by

\[ \int_0^T w_c(f,N,A,t)\cdot w_c(r f,N,A,t) \,dt = 
\sum\limits_{n = 1}^N \sum\limits_{m = 1}^N {d^{(n+m)}\int_0^T  w(nf,t)w(mrf,t)\,dt }
\]
The integral on the right hand side has been computed in Lemma 1. By Lemma \ref{bound_on_wc2} we know that the rate of growth of the integrals appearing in the denominator of the formula for cosine similarity (Definition 4) as a function of depth $N$ is linear and hence the pattern of local maxima of the consonance function observed earlier persists to the situation with harmonics of decaying amplitude.  

Figure \ref{fig:440d08_and_pl440d08}(a) shows the result for $d=0.8$. As before, clear maxima occur for m3, M3, P4, P6, M6 and P8, although the relative magnitudes of the maxima have changed. The corresponding Plomp--Levelt--type curve is shown in Figure \ref{fig:440d08_and_pl440d08}(b).

\begin{figure}
    \begin{subfigure}{0.49\textwidth}
	   \centering
	   \includegraphics[width=\linewidth,height=0.3\textheight]{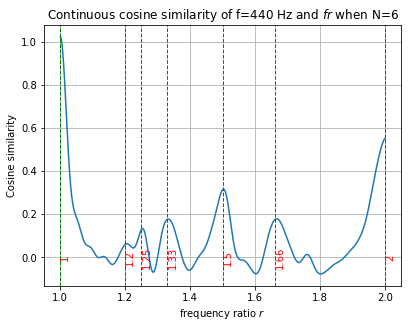}   
    \end{subfigure}
    \begin{subfigure}{0.49\textwidth}
	   \centering
	   \includegraphics[width=\linewidth,height=0.3\textheight]{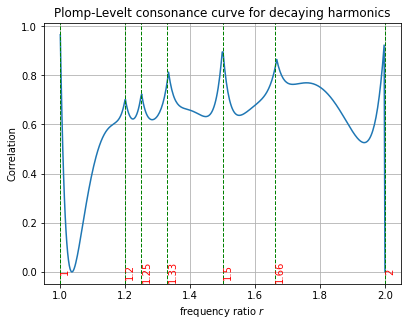}
    \end{subfigure}
    \caption{ Frequence $f=440$, depth $N=6$, decay rate $d=0.8$ (a) Consonance based on correlation
    (b) Plomp-Level consonance curve}
	   \label{fig:440d08_and_pl440d08}
\end{figure}


\section{Concluding remarks}

The presented results show that the mathematical notion of correlation is a good
good quantification of the musical notion of consonance: consonant musical intervals are those whose frequency ratios give local maxima of the consonance function. These appear only once the harmonics of a pure tone are included, and the number of these maxima increases with the number of harmonics present (depth).  A testimony to this is contained in the graphs
presented above, which have maxima at   $r=\displaystyle\frac{3}{2}$ (perfect fifth), $r=\displaystyle\frac{4}{3}$ (perfect fourth), 
$r=\displaystyle\frac{5}{3}$ (major sixth),
$r=\displaystyle\frac{5}{4}$ (major third),
as well as some other ratios which are considered 
consonant in music theory.

The significance of this contribution stems from the fact that this description of consonance is a purely analytic property of the mathematical models of sound,
and does not appeal in any way to physiological or psycho-acoustic phenomena.

\section*{Acknowledgement}
P.D. acknowledge the support of Dioscuri program initiated by the Max Planck Society, jointly managed with the National Science Centre (Poland), and mutually funded by the Polish Ministry of Science and Higher Education and the German Federal Ministry of Education and Research.



\begin{thebibliography}{5}


\bibitem{fletcher12} Fletcher, Neville H., and Thomas D. Rossing, \textit{The Physics of Musical Instruments}. Springer Science \& Business Media, 2012.

\bibitem{hall86} Hall, Donald E., 
\textit{Piano string excitation in the case of small hammer mass }
J. Acoust. Soc. Am., 79(1), 141--147, 1986.


\bibitem{Nolane} C. Nolane, {\sl Mathematics and Music},
in ``The Pricenton Companion To Mathematics'', T. Gowers, ed. Priceton Univ. Press, 2008

\bibitem{nurowski04} Nurowski, Pawe\l,  \textit{Why does a piano sounds different than a harpsichord?}, unpublished.	

\bibitem{Plomp65} R. Plomp and W.~J.~M. Levelt,{\sl  Tonal Consonance and Critical Bandwidth}, Journal of the Acoustical Society of America,  38/1965: 548--560.

\bibitem{Roederer2009} J.G. Roederer, Introduction to the physics and psychophysics of Music, Spriner 2007.

\bibitem{Sethares1993} W.A. Sethares, Local consonance and the relationship between timbre and scale, J. Acounst. Soc. Am 94(3), 1993.

\bibitem{Sethares2008} W.A. Sethares, Tuning, Timbre, Sepctrum and Scale, 2nd Ed., Springer 2008.

\end{thebibliography}
\end{document}